\documentclass[12pt]{article}

\usepackage[T1]{fontenc}
\usepackage[utf8]{inputenc}
\usepackage{lmodern}

\usepackage[left=1.05in,right=1.05in,top=1in,bottom=1in]{geometry}
\usepackage{amsmath,amssymb,amsfonts,amsthm,mathtools}
\usepackage{bm}
\usepackage{graphicx}
\usepackage{float}
\usepackage{tikz}
\usetikzlibrary{arrows.meta,calc,positioning,decorations.markings}
\usepackage{enumitem}
\usepackage{xcolor}
\usepackage{microtype}
\usepackage{hyperref}
\usepackage[noabbrev]{cleveref}

\hypersetup{
  colorlinks=true,
  linkcolor=blue!50!black,
  citecolor=blue!60!black,
  urlcolor=blue!60!black
}

\numberwithin{equation}{section}

\newtheorem{theorem}{Theorem}[section]
\newtheorem{corollary}[theorem]{Corollary}

\newtheorem{lemma}[theorem]{Lemma}
\theoremstyle{definition}
\newtheorem{definition}[theorem]{Definition}
\theoremstyle{remark}
\newtheorem{remark}[theorem]{Remark}

\newcommand{\arxivgraphic}[2]{%
  \IfFileExists{#2}{\includegraphics[#1]{#2}}%
  {\fbox{\parbox[c][4cm][c]{0.80\linewidth}{%
  \centering Figure file \texttt{#2} not included.}}}%
}

\title{Schwarz-Type Null Curves in $\mathbb{C}^4$: Symmetry and Period Reduction}

 \author{Erhan G\"uler \and Magdalena Toda }

\date{}

\begin{document}

\maketitle

\begin{abstract}
We study a genus-three hyperelliptic holomorphic null-curve family in
$\mathbb{C}^4$, modelled on the algebraic data of the classical Schwarz
P/D family, whose real parts define minimal immersions into
$\mathbb{R}^4$. For the order-four automorphism
$\omega\mapsto i\omega$ of the underlying Schwarz curve, we compute
explicitly its action on the four holomorphic Weierstrass $1$-forms and
derive the resulting identities for all real period vectors.
Consequently, for every lattice invariant under the induced target
rotation, torus-period closure can be checked on one representative
from each orbit of a symmetry-stable homology generating set. We also
identify precisely when the additional parameter produces a
nondegenerate codimension-two deformation. The paper does not claim
the construction of a new embedded periodic minimal surface in
$\mathbb{R}^4$; rather, it provides an explicit symmetry reduction of
the period problem associated with this family.
\end{abstract}

\noindent\textbf{Keywords:}
Minimal surfaces in $\mathbb{R}^4$; holomorphic null curves in
$\mathbb{C}^4$; Schwarz surfaces; period problems; symmetry; higher
codimension; Weierstrass representation; flat tori.

\medskip
\noindent\textbf{Mathematics Subject Classification 2020:}
53A10, 53C42, 30F30, 58E20.

\tableofcontents
\bigskip

\section{Introduction}

Minimal surfaces have long occupied a central place in differential
geometry, with their development shaped by the interaction between
complex analysis, geometry, and symmetry. Classical contributions of
Schwarz \cite{Schwarz}, together with the later treatments of Osserman
\cite{Osserman} and Nitsche \cite{Nitsche}, established much of the
analytic and geometric framework underlying the modern theory.

In Euclidean three-space, the theory has developed considerably.
The works of Hoffman and Meeks \cite{HoffmanMeeks}, as well as
López and Ros \cite{LopezRos}, demonstrate that minimal surfaces
exhibit a rich global structure even under strong topological
constraints. Periodic and triply periodic minimal surfaces provide
another important direction. Grosse-Brauckmann
\cite{GBrauckmannSurvey} and Weyhaupt \cite{WeyhauptGyroid}, among
others, studied such surfaces and their geometric structures,
highlighting the importance of symmetry and period-closing conditions.

Two classical sources of inspiration are the minimal surfaces associated
with Riemann and Schwarz. Riemann's minimal examples form a family of
singly periodic minimal surfaces whose Weierstrass data are closely
related to elliptic functions. Schwarz's constructions, on the other
hand, provide highly symmetric triply periodic minimal surfaces,
including the classical P and D surfaces and their associate family.
Their construction illustrates the fundamental role played by discrete
symmetries, fundamental domains, and period closure in producing
globally periodic minimal surfaces.

In higher codimension, new geometric and analytic phenomena appear.
Grosse-Brauckmann and Kürsten \cite{GrosseBrauckmannKuersten}
constructed embedded $n$-periodic minimal surfaces in $\mathbb{R}^n$
by solving Plateau problems for cubical Jordan curves and extending
the resulting minimal disks by Schwarz reflections. Their construction
gives, in particular, embedded periodic examples in $\mathbb{R}^4$.
The approach of the present paper is different. Rather than using a
reflection--Plateau construction, we study the period map associated
with an explicit holomorphic null-curve family defined on a fixed
genus-three hyperelliptic curve.

Minimal surfaces in $\mathbb{R}^4$ exhibit features that have no direct
counterpart in the classical codimension-one setting. In particular,
their normal bundle has rank two, and their Weierstrass data possess
additional degrees of freedom. A conformal minimal immersion into
$\mathbb{R}^4$ can be described locally as the real part of an integral
of holomorphic $1$-forms
\[
\Phi=(\phi_1,\phi_2,\phi_3,\phi_4)
\]
satisfying the nullity condition
\[
\phi_1^2+\phi_2^2+\phi_3^2+\phi_4^2=0.
\]
Thus, the underlying holomorphic null curve naturally takes values in
$\mathbb{C}^4$, whereas its real part defines a minimal immersion into
$\mathbb{R}^4$. This distinction is particularly important when period
conditions and lattice quotients are considered.

Several works have investigated minimal surfaces in higher-dimensional
Euclidean spaces and flat tori. Small \cite{SmallR4} studied algebraic
minimal surfaces in $\mathbb{R}^4$, while Shoda
\cite{ShodaJLMS,ShodaTrigonal,ShodaModuliArXiv} investigated moduli
spaces and constructed families of minimal surfaces in flat tori.
Pirola \cite{PirolaSpin} studied related variational questions involving
spin structures and periodic minimal surfaces. More recently, Toda and
G\"uler \cite{TodaGulerR4}, and G\"uler and Toda \cite{TodaGulerRn},
developed generalized Weierstrass-type representations and explicit
constructions for minimal surfaces in $\mathbb{R}^4$ and
$\mathbb{R}^n$.

The interaction between minimal surface theory and other areas of
geometry has also produced important developments. Bolton, Pedit, and
Woodward \cite{BoltonPeditWoodward} established connections with affine
Toda field equations, while Guest \cite{Guest} developed a broader
framework relating harmonic maps and integrable systems. Representation
formulae in non-Euclidean settings were studied, for example, by
Aiyama and Akutagawa \cite{AiyamaAkutagawa}. Classical references on
complex geometry and special functions, such as Kobayashi
\cite{Kobayashi} and Byrd and Friedman \cite{ByrdFriedman}, provide
additional analytic tools relevant to these constructions.

The purpose of the present paper is to reinterpret a specific aspect of
the classical Schwarz construction within the framework of holomorphic
null curves in $\mathbb{C}^4$ and minimal immersions into
$\mathbb{R}^4$. Starting from algebraic data modelled on the classical
Schwarz P/D family, we consider a genus-three hyperelliptic
holomorphic null-curve family and study its associated real period map.

A central role is played by the order-four automorphism
\[
\omega\longmapsto i\omega
\]
of the underlying Schwarz curve. We compute explicitly the induced
action of this automorphism on the four holomorphic Weierstrass
$1$-forms. The resulting equivariance relations yield concrete linear
identities among the corresponding real period vectors. Consequently,
for a lattice invariant under the induced target rotation, the
torus-period condition can be reduced to one representative from each
orbit of a symmetry-stable homology generating set.

We also determine precisely when the additional parameter in the
holomorphic data produces a nondegenerate codimension-two deformation.
Thus, the principal contribution of the paper is not the construction
of a new embedded periodic minimal surface, but an explicit
symmetry reduction of the period problem for this Schwarz-type
holomorphic null-curve family.

Accordingly, the term ``Schwarz-type'' is used here in a precise and
restricted sense. It refers to the common algebraic seed and to the
symmetry-driven treatment of the period map. We do not claim that the
classical embedded Schwarz P, D, or related surfaces have been lifted
to embedded periodic minimal surfaces in $\mathbb{R}^4$, nor do we
establish complete period closure or embeddedness for the family
considered here.

\subsection*{Organization and scope}

Section~\ref{sec:R4WE} recalls the Weierstrass representation of
minimal surfaces in $\mathbb{R}^4$ in terms of holomorphic null curves
in $\mathbb{C}^4$.
Section~\ref{sec:seed} introduces the Schwarz-inspired genus-three
hyperelliptic seed and derives the corresponding null $1$-forms.
Section~\ref{sec:elliptic} discusses the relevant hyperelliptic
primitives and their reduction to elliptic integrals.
Section~\ref{sec:PDG} reviews the classical Schwarz P/D/associate-family
picture and formulates the corresponding codimension-two deformation.
Section~\ref{sec:period} introduces the real period map, periodicity in
flat four-tori, and the associated lattice-period conditions.
Sections~\ref{sec:symmetry} and \ref{sec:strategy} develop the symmetry
and homological framework used to reduce these conditions.
Section~\ref{sec:newresult} contains the main symmetry-reduction
theorem. The final section discusses the scope and limitations of the
construction and distinguishes it from known reflection--Plateau
constructions. For completeness, Appendix~A records standard integral
normal forms, while Appendix~B discusses the underlying hyperelliptic
structure and its relevant symmetries.

% -------------------------
% Schwarz-Type Minimal Surfaces in R^4
% -------------------------
\section{Schwarz-Type Minimal Surfaces in $\mathbb{R}^4$}\label{sec:R4WE}

In this section we set up Schwarz-type minimal surfaces in $\mathbb{R}^4$
within the adaptive Weierstrass framework developed in \cite{TodaGulerRn} and \cite{TodaGulerR4}.
Our approach is compatible with the null-holomorphic representation and emphasizes symmetry of the associated holomorphic data.

\subsection{Weierstrass data in $\mathbb{R}^4$}

Let $D\subset\mathbb{C}$ be a simply connected domain.
Following \cite{TodaGulerRn} and \cite{TodaGulerR4}, conformal minimal immersions into $\mathbb{R}^4$ arise from holomorphic data
\[
f,\ g_1,\ g_2 : D \longrightarrow \mathbb{C}
\]
via the coefficient functions
\begin{equation}\label{eq:WE-R4}
\begin{aligned}
\phi_1 &= \frac12 f(1 - g_1^2 - g_2^2),\\
\phi_2 &= \frac{i}{2} f(1 + g_1^2 + g_2^2),\\
\phi_3 &= f g_1,\\
\phi_4 &= f g_2 ,
\end{aligned}
\end{equation}
and the associated $\mathbb{C}^4$-valued holomorphic $1$-form
\[
\Phi = (\Phi_1,\Phi_2,\Phi_3,\Phi_4):=(\phi_1,\phi_2,\phi_3,\phi_4)\,d\omega.
\]
These data satisfy identically the nullity condition (in coefficient form)
\[
\phi_1^2+\phi_2^2+\phi_3^2+\phi_4^2=0,
\]
equivalently $\Phi_1^2+\Phi_2^2+\Phi_3^2+\Phi_4^2=0$ as $1$-forms.

The associated minimal immersion is
\[
X(\omega)=\Re\int_{\omega_0}^{\omega}\Phi
=\Re\int_{\omega_0}^{\omega}(\phi_1,\phi_2,\phi_3,\phi_4)\,d\omega,
\]
defined wherever the induced metric does not vanish.

\begin{remark}
On a simply connected domain, the integral defining $X$ is path independent.
In codimension one, the Weierstrass-Enneper representation in $\mathbb{R}^3$ is often written in terms of a meromorphic Gauss map and a holomorphic $1$-form.
Formula \eqref{eq:WE-R4} is a codimension-two analogue: $(g_1,g_2)$ encodes two complex degrees of freedom, and the single quadratic constraint is exactly the holomorphic nullity condition.
\end{remark}

\begin{remark}[Induced metric: coordinate-invariant viewpoint]
The induced metric is
\[
ds^2=|\Phi_1|^2+|\Phi_2|^2+|\Phi_3|^2+|\Phi_4|^2,
\]
where $|\Phi_j|$ denotes the pointwise norm of a $1$-form in any local complex coordinate.
In a local coordinate $\omega$ on which $\Phi_j=\phi_j(\omega)\,d\omega$, one computes
\[
ds^2=\bigl(|\phi_1|^2+|\phi_2|^2+|\phi_3|^2+|\phi_4|^2\bigr)\,|d\omega|^2
=\frac{|f|^2}{2}\bigl(1+|g_1|^2+|g_2|^2\bigr)^2|d\omega|^2.
\]

Regularity holds precisely where $\Phi$ does not vanish; in particular, this excludes points where $f=0$ or where the coefficient vector in \eqref{eq:WE-R4} vanishes.

When the data are pulled back to a branched cover, one must interpret this expression in a genuine local coordinate on the cover (see Appendix~B for the Schwarz seed, where apparent singularities in the $\omega$-projection cancel in a local coordinate on $\Sigma$).
\end{remark}

% -------------------------
% Schwarz seed
% -------------------------
\section{A Schwarz-Type Holomorphic Seed}\label{sec:seed}

\subsection{Schwarz-type holomorphic seed}

To produce Schwarz-type geometry, we choose a holomorphic seed whose algebraic structure mirrors the classical Schwarz polynomial.
Let
\begin{equation}\label{eq:Schwarz-seed}
f(\omega)=\frac{2}{\sqrt{\omega^8-14\omega^4+1}},
\end{equation}
where the square root is taken on the two-sheeted branched covering
\[
\Sigma=\{(\omega,y)\in\mathbb{C}^2 : y^2=\omega^8-14\omega^4+1\}.
\]

The zeros of the polynomial
\[
P(\omega)=\omega^8-14\omega^4+1
\]
are precisely the branch points of $\Sigma\to\mathbb{C}$.
Setting $t=\omega^4$, the equation $P(\omega)=0$ reduces to
\[
t^2-14t+1=0,
\qquad
t=7\pm4\sqrt{3}.
\]
Since
\[
7\pm4\sqrt{3}=(2\pm\sqrt{3})^2,
\]
we obtain
\[
\omega^4=(2\pm\sqrt{3})^2,
\]
and hence the eight branch points are
\[
\omega\in
\Big\{
\pm\sqrt{2+\sqrt{3}},
\ \pm i\sqrt{2+\sqrt{3}},
\ \pm\sqrt{2-\sqrt{3}},
\ \pm i\sqrt{2-\sqrt{3}}
\Big\}.
\]

\begin{remark}
Since $\deg P=8$ and $P$ has simple roots, the compactification of $\Sigma$ is a hyperelliptic curve of genus
$g=(8-2)/2=3$.
This genus matches the genus of the standard compact quotients of the Schwarz P/D surfaces in $\mathbb{T}^3$, and is one reason this algebraic seed is well aligned with the TPMS period picture.
\end{remark}

\subsection{Explicit null $1$-forms from the seed}

We now write the Weierstrass data explicitly with respect to the projection coordinate $\omega$.
Let
\[
f(\omega)=\frac{2}{\sqrt{\omega^8-14\omega^4+1}}=\frac{2}{y}, \qquad
g_1(\omega)=\omega, \qquad
g_2(\omega)=\lambda\omega,
\]
where $\lambda\in\mathbb{C}$ is a deformation parameter in the additional normal direction.
Since
\[
g_1^2(\omega)+g_2^2(\omega)=(1+\lambda^2)\omega^2,
\]
the coefficient functions in \eqref{eq:WE-R4} become
\[
\begin{aligned}
\phi_1(\omega)
&=
\frac{1-(1+\lambda^2)\omega^2}
{\sqrt{\omega^8-14\omega^4+1}},
\\[2mm]
\phi_2(\omega)
&=
\frac{i\bigl(1+(1+\lambda^2)\omega^2\bigr)}
{\sqrt{\omega^8-14\omega^4+1}},
\\[2mm]
\phi_3(\omega)
&=
\frac{2\omega}
{\sqrt{\omega^8-14\omega^4+1}},
\\[2mm]
\phi_4(\omega)
&=
\frac{2\lambda\omega}
{\sqrt{\omega^8-14\omega^4+1}}.
\end{aligned}
\]
Equivalently, the $\mathbb{C}^4$-valued $1$-form on $\Sigma$ is
\[
\Phi=(\phi_1,\phi_2,\phi_3,\phi_4)\,d\omega,
\]
and the nullity condition $\phi_1^2+\phi_2^2+\phi_3^2+\phi_4^2=0$ is identically satisfied.

\medskip

The associated minimal immersion is given locally by
\[
X(\omega)
=
\Re\int_{\omega_0}^{\omega}
\Phi
=
\Re\int_{\omega_0}^{\omega}
\bigl(\phi_1,\phi_2,\phi_3,\phi_4\bigr)\,d\omega,
\]
where the integral is taken on the hyperelliptic Riemann surface

\[
\Sigma
=
\bigl\{(\omega,y)\in\mathbb{C}^2:\ y^2=\omega^8-14\omega^4+1\bigr\}.
\]

\paragraph{Primitive functions.}
The coordinate functions of the immersion are given by the real parts of the primitives
\begin{equation}
\begin{aligned}
\int \Phi_1
&=
\int
\frac{1-(1+\lambda^2)\omega^2}
{\sqrt{\omega^8-14\omega^4+1}}
\,d\omega,
\\[2mm]
\int \Phi_2
&=
i\int
\frac{1+(1+\lambda^2)\omega^2}
{\sqrt{\omega^8-14\omega^4+1}}
\,d\omega,
\\[2mm]
\int \Phi_3
&=
2\int
\frac{\omega}
{\sqrt{\omega^8-14\omega^4+1}}
\,d\omega,
\\[2mm]
\int \Phi_4
&=
2\int
\frac{\lambda\omega}
{\sqrt{\omega^8-14\omega^4+1}}
\,d\omega.
\end{aligned}
\label{eq:weierstrass-Schwarz}
\end{equation}
Figures \ref{fig:x1x2x3-x1x2x4-proj1}--\ref{fig:polar-proj-2} present the projections of the Schwarz-type minimal surface defined in \eqref{eq:weierstrass-Schwarz}. 
They demonstrate the geometry of the immersion in $\mathbb{R}^4$ under both $(u,v)$ and $(r,\theta)$ parametrizations.

\paragraph{Structural remarks.}
All coordinate functions arise from hyperelliptic integrals on $\Sigma$, whose branch points are the eight zeros of $\omega^8-14\omega^4+1$.
Moreover, at the level of $1$-forms one has $\Phi_4=\lambda\,\Phi_3$, hence
\[
\int \Phi_4
=
\lambda\int \Phi_3,
\]
which induces an affine dependence between the corresponding components of the minimal immersion after taking real parts.

% -------------------------
% Elliptic reduction
% -------------------------
\section{Hyperelliptic Primitives and Reduction to Elliptic Integrals}\label{sec:elliptic}

\subsection{Reduction by the substitution $t=\omega^2$}

We now analyze the primitives above.
A key observation is that the relevant hyperelliptic integrals reduce to elliptic integrals after the substitution
\[
t=\omega^2,
\qquad
dt=2\omega\,d\omega.
\]
This reduction is classical for quartic radicals and underlies the elliptic character of the Schwarz P/D family.

\begin{figure}[t!]
    \centering
    \arxivgraphic{width=1\linewidth}{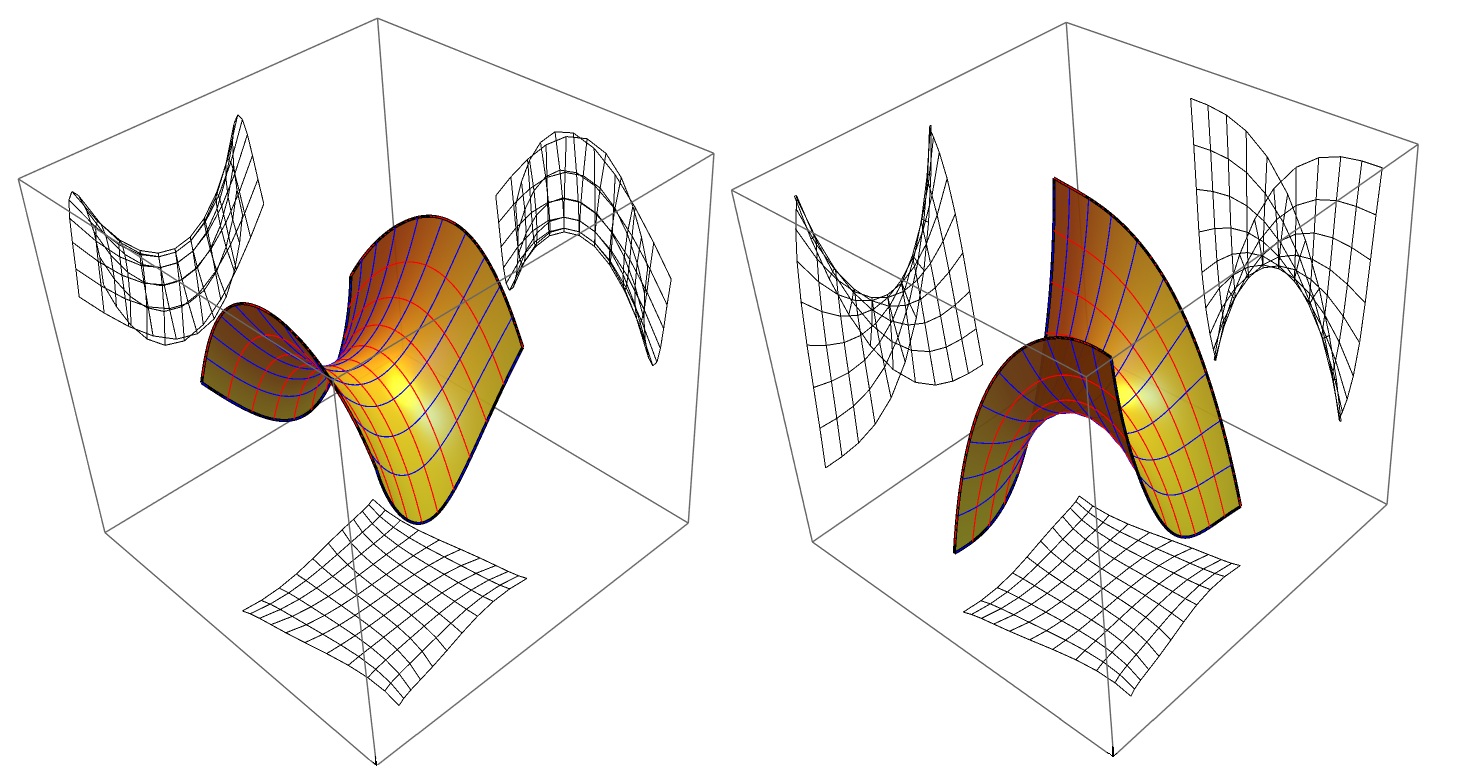}
   \caption{Projections of the Schwarz-type minimal surface $X(u,v)$,
defined in \eqref{eq:weierstrass-Schwarz}, into the $X_1X_2X_3$-space (left)
and the $X_1X_2X_4$-space (right).}
    \label{fig:x1x2x3-x1x2x4-proj1}
\end{figure}

\begin{figure}[t!]
    \centering
    \arxivgraphic{width=1\linewidth}{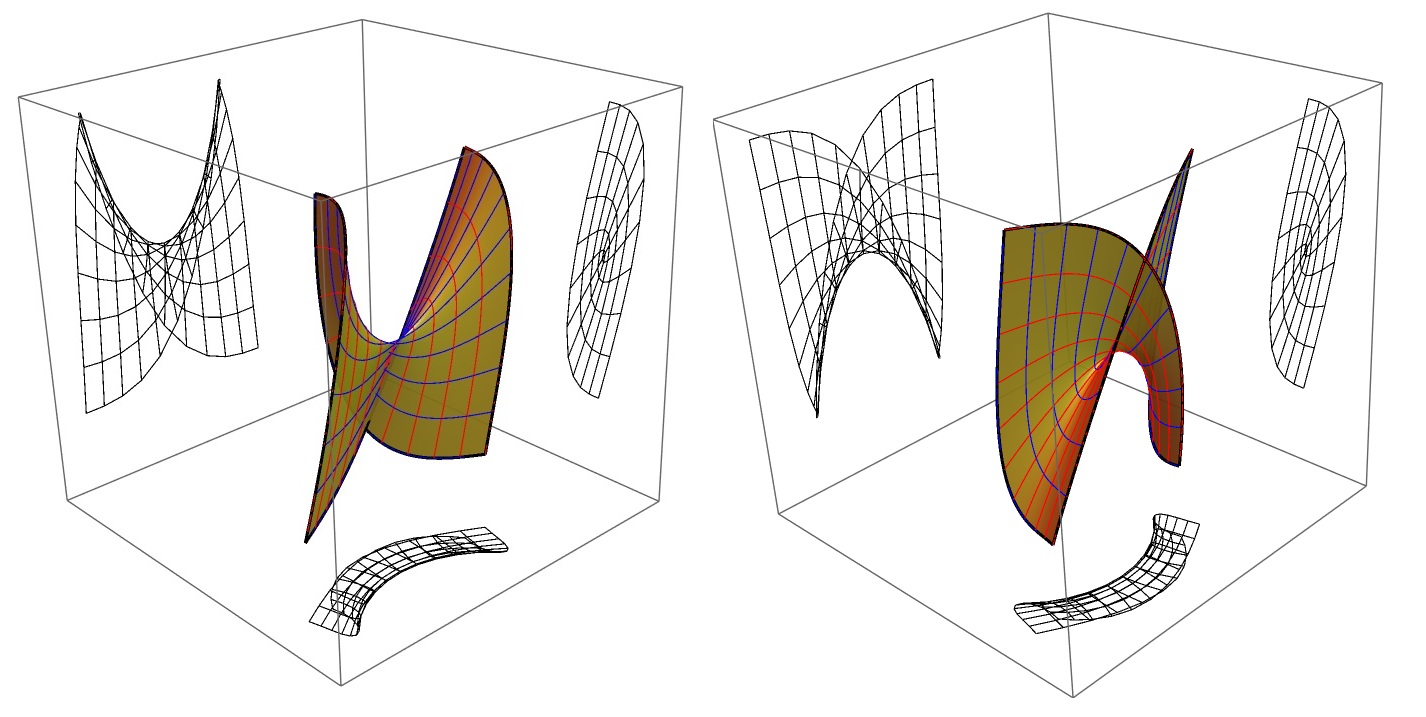}
   \caption{Projections of the Schwarz-type minimal surface $X(u,v)$,
defined in \eqref{eq:weierstrass-Schwarz}, into the $X_1X_3X_4$-space (left)
and the $X_2X_3X_4$-space (right).}
    \label{fig:x1x3x4-x2x3x4-proj2}
\end{figure}

\begin{figure}[t!]
    \centering
    \arxivgraphic{width=1\linewidth}{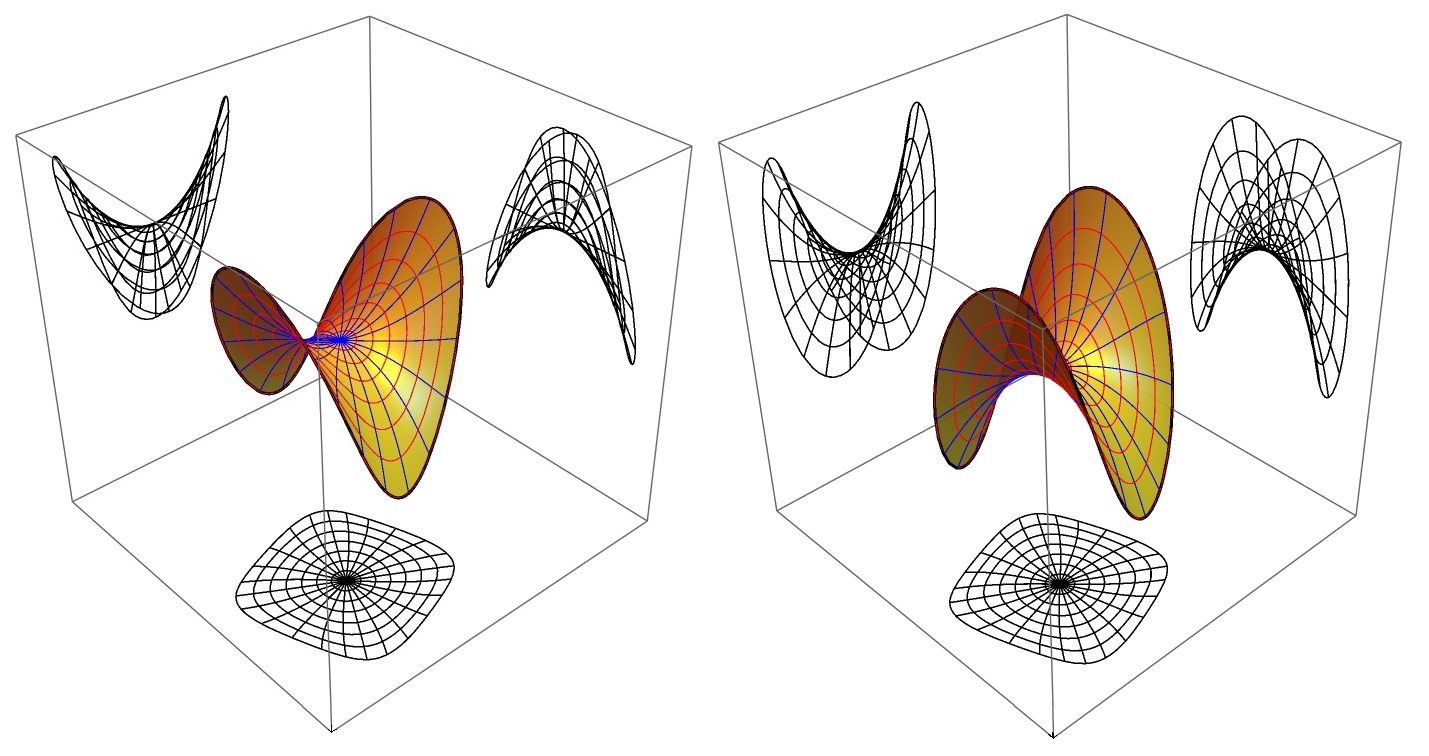}
   \caption{Projections of the Schwarz-type minimal surface $X(r,\theta)$ defined in \eqref{eq:weierstrass-Schwarz}, into the $X_1X_2X_3$-space (left)
and the $X_1X_2X_4$-space (right).}
    \label{fig:polar-proj-1}
\end{figure}

\begin{figure}[t!]
    \centering
    \arxivgraphic{width=1\linewidth}{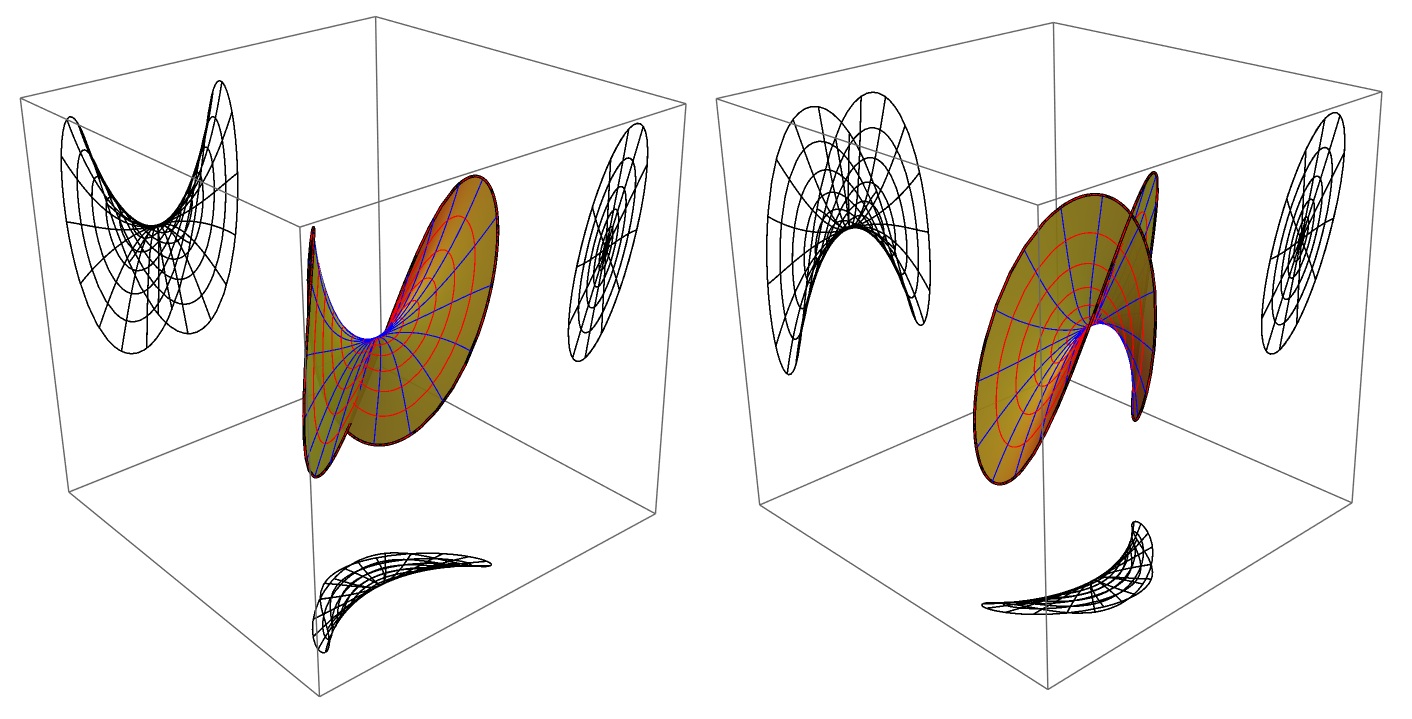}
   \caption{Projections of the Schwarz-type minimal surface $X(r,\theta)$ defined in \eqref{eq:weierstrass-Schwarz}, into the $X_1X_3X_4$-space (left)
and the $X_2X_3X_4$-space (right).}
    \label{fig:polar-proj-2}
\end{figure}

\medskip

\noindent
\emph{Third and fourth coordinates.}
We begin with the third coordinate function:
\[
\int \Phi_3
=
2\int
\frac{\omega}{\sqrt{\omega^8-14\omega^4+1}}\,d\omega
=
\int
\frac{dt}{\sqrt{t^4-14t^2+1}}.
\]
The quartic polynomial factorizes as
\[
t^4-14t^2+1=(t^2-\alpha^2)(t^2-\beta^2),
\qquad
\alpha=2+\sqrt{3},\quad
\beta=2-\sqrt{3}.
\]

Consequently $\int\Phi_3$ is expressible in terms of incomplete elliptic integrals of the first kind, after a standard normalization of the quartic radical.
Since $\Phi_4=\lambda\Phi_3$, the fourth primitive satisfies
\[
\int \Phi_4
=
\lambda\int \Phi_3.
\]

\medskip

\noindent
\emph{First and second coordinates.}
The remaining primitives involve integrals of the form
\[
\int
\frac{1}{\sqrt{\omega^8-14\omega^4+1}}\,d\omega,
\qquad
\int
\frac{\omega^2}{\sqrt{\omega^8-14\omega^4+1}}\,d\omega,
\]
which are Abelian integrals on the genus-three curve $\Sigma$.  Their evaluation can be organized using quotient symmetries of $\Sigma$ and, for suitable symmetry-adapted combinations, may be related to elliptic quotient integrals.  No explicit elliptic reduction for all first and second coordinate periods is needed for the symmetry theorem proved below.

\subsection{Geometric meaning of the elliptic quotient}

The branch configuration has nontrivial quotient symmetries, and the third and fourth coordinate periods are governed by the elliptic quotient displayed above.  This reflects the intrinsic algebraic symmetry of the underlying Schwarz polynomial and motivates the use of a P/D-type algebraic seed.

\begin{remark}
One may keep the primitives in hyperelliptic form on $\Sigma$ for conceptual clarity, or reduce them to elliptic standard forms when explicit period computations are desired.
For later period-closing problems, it is often advantageous to keep the $t$-variable quartic radical representation since it aligns well with classical TPMS computations.
\end{remark}

% =========================================================
% Classical Schwarz P/D/G data and the lift to R^4
% =========================================================
\section{Classical Schwarz P, D, and G Surfaces and a Codimension-Two Lift}\label{sec:PDG}

\subsection{Weierstrass data for the Schwarz P and D surfaces in $\mathbb{R}^3$}

Let $M$ be a conformal minimal immersion into $\mathbb{R}^3$.
On a simply connected domain, the Weierstrass-Enneper representation is
\begin{equation}\label{eq:WE-R3}
X(z)=\Re\int_{z_0}^z \Bigl(\tfrac12(1-g^2),\ \tfrac{i}{2}(1+g^2),\ g\Bigr)\,f\,dz,
\end{equation}
where $g$ is meromorphic and $f\,dz$ is a holomorphic 1-form.

A classical parametrization of the Schwarz P surface (in appropriate coordinates on its underlying branched cover) uses
\begin{equation}\label{eq:SchwarzPDdata}
g(z)=z,
\qquad
f(z)=\frac{1}{\sqrt{1-14z^4+z^8}}.
\end{equation}
The conjugate Schwarz D surface is obtained by the Bonnet rotation
\begin{equation}\label{eq:Ddata}
g(z)=z,
\qquad
f(z)=\frac{i}{\sqrt{1-14z^4+z^8}},
\end{equation}
and more generally the associate (Bonnet) family is
\begin{equation}\label{eq:associate-family}
f_\theta(z)=\frac{e^{i\theta}}{\sqrt{1-14z^4+z^8}},
\qquad
g(z)=z,
\qquad
\theta\in[0,\pi/2].
\end{equation}
In this normalization, $\theta=0$ corresponds to the P surface and $\theta=\pi/2$ to the D surface.
The gyroid (G) appears as a distinguished embedded member in the associate family at a specific Bonnet angle determined by period closure (see, for example, \cite{WeyhauptGyroid,GBrauckmannSurvey}).

\medskip

\noindent
\begin{remark} [period closure in $\mathbb{R}^3$].
Triply periodicity means that all real periods of the coordinate 1-forms in \eqref{eq:WE-R3} lie in a rank-3 lattice $\Lambda\subset\mathbb{R}^3$.
The period-map viewpoint and deformation theory for periodic minimal surfaces are developed in the TPMS literature; see, for example, \cite{PirolaSpin,GBrauckmannSurvey}.
\end{remark}

\subsection{A codimension-two lift of the P/D seed}

Motivated by \eqref{eq:SchwarzPDdata}--\eqref{eq:associate-family}, we propose a codimension-two lift built on the same algebraic curve:
\begin{equation}\label{eq:lift}
g_1(z)=z,
\qquad
g_2(z)=\lambda z,
\qquad
f_\theta(z)=\frac{e^{i\theta}}{\sqrt{1-14z^4+z^8}},
\end{equation}
with parameters $\lambda\in\mathbb{C}$ and $\theta\in[0,\pi/2]$.
When $\lambda=0$, the immersion lies in a 3-dimensional affine subspace and recovers the classical associate family.  More generally, if $\lambda\in\mathbb{R}$, then $\Phi_4=\lambda\Phi_3$ implies $X_4=\lambda X_3+\mathrm{constant}$, so the image is again contained in an affine $3$-space.  Thus a necessary condition for this lift to use the additional real target direction is $\operatorname{Im}\lambda\neq0$.

\begin{remark}
More general lifts can be built by allowing $g_1\neq g_2$ (beyond a scalar multiple) or by modifying the prefactor while preserving the nullity condition.
The present paper focuses on the clean symmetric lift \eqref{eq:lift}, which is already rich enough to reveal the core $\mathbb{R}^4$ period phenomena.
\end{remark}

% =========================================================
% Period problem section
% =========================================================
\section{The Real Period Map in $\mathbb{R}^4$ and Periodicity in $\mathbb{T}^4$}\label{sec:period}

\subsection{The real period map}

Let $\Sigma$ be a compact Riemann surface on which the holomorphic data are single-valued.
Let $\Phi=(\Phi_1,\Phi_2,\Phi_3,\Phi_4)$ be a holomorphic null $1$-form as above.
For any homology class $[\gamma]\in H_1(\Sigma;\mathbb{Z})$, define the real period
\begin{equation}\label{eq:real-period}
\mathcal{P}([\gamma])=\Re\int_\gamma \Phi\in\mathbb{R}^4.
\end{equation}

\begin{definition}[Period closure in $\mathbb{R}^4$]
We say that $X=\Re\int \Phi$ is \emph{single-valued} on $\Sigma$ if $\mathcal{P}([\gamma])=0$ for all $[\gamma]\in H_1(\Sigma;\mathbb{Z})$.
\end{definition}

In the periodic setting, one seeks a weaker condition.

\begin{definition}[Periodicity in a flat torus]
Let $\Lambda\subset\mathbb{R}^4$ be a rank-4 lattice and $\mathbb{T}^4=\mathbb{R}^4/\Lambda$.
We say that $X=\Re\int \Phi$ \emph{descends to a minimal immersion into $\mathbb{T}^4$} if
\begin{equation}\label{eq:period-closure}
\mathcal{P}\bigl(H_1(\Sigma;\mathbb{Z})\bigr)\subset \Lambda.
\end{equation}
\end{definition}

The viewpoint that periodic minimal surfaces are best understood as compact minimal immersions into flat tori is developed in the literature on minimal surfaces in $\mathbb{T}^n$, and in particular in $\mathbb{T}^4$; see, for example, \cite{ShodaJLMS,ShodaTrigonal,ShodaModuliArXiv}.

\subsection{Codimension-two features of the period system}

Even when the underlying algebraic curve is the same as in the classical P/D setting, the period system in $\mathbb{R}^4$ differs in two fundamental ways.

First, the target is $\mathbb{R}^4$, so the period vectors live in a four-dimensional real space; this changes the lattice-combinatorics of closure and introduces new possibilities for satisfying closure by allowing additional directions.

Second, the nullity condition couples the $1$-forms quadratically but still leaves substantial freedom in the pair $(g_1,g_2)$.
This flexibility suggests a natural deformation theory: for fixed underlying curve $\Sigma$, one can vary $\lambda$ and $\theta$ in \eqref{eq:lift} (and beyond) to search for solutions of \eqref{eq:period-closure}.

\begin{remark}
The parameter $\theta$ is classical (Bonnet angle) and rotates within the associate family in $\mathbb{R}^3$.
The parameter $\lambda$ is new: it activates the second normal direction and, in principle, may help solve higher-dimensional period constraints by introducing additional degrees of freedom not available in $\mathbb{R}^3$.
\end{remark}

% =========================================================
% Symmetry section
% =========================================================
\section{Symmetry of the Schwarz Curve and Induced Actions on Periods}\label{sec:symmetry}

\subsection{Algebraic symmetries of the hyperelliptic model}

Consider the hyperelliptic curve
\[
\Sigma:\quad y^2=\omega^8-14\omega^4+1.
\]
The polynomial $\omega^8-14\omega^4+1$ is invariant under $\omega\mapsto i\omega$ and $\omega\mapsto -\omega$, hence $\Sigma$ carries natural order-4 and order-2 automorphisms lifting these maps (with the appropriate action on $y$).
Moreover, the transformation $\omega\mapsto 1/\omega$ preserves the set of branch points and induces a further symmetry after multiplying by the appropriate factor on the $y$-coordinate.

These symmetries are the algebraic shadow of the reflection symmetries used in the classical Schwarz constructions.
Their key role in our program is that they induce linear relations among the periods of the holomorphic $1$-forms $\Phi_j$.

\subsection{Symmetry constraints on the period map}

Let $G$ be a finite subgroup of $\mathrm{Aut}(\Sigma)$ preserving the divisor of poles and zeros of the data.
Assume that the holomorphic null curve data are \emph{$G$-equivariant} in the sense that, for each $\sigma\in G$,
\[
\sigma^*\Phi = M_\sigma \Phi
\quad\text{for some constant matrix } M_\sigma\in U(4),
\]
so $\sigma$ acts linearly on the space of complex-valued $1$-forms.
Then the period map obeys
\[
\mathcal{P}(\sigma_*[\gamma])=\Re\int_{\sigma(\gamma)}\Phi
=\Re\int_\gamma \sigma^*\Phi
=\Re\int_\gamma (M_\sigma\Phi),
\]
which yields explicit linear relations between the real period vectors on $\gamma$ and on $\sigma(\gamma)$ once $M_\sigma$ is known.
Thus $G$-symmetry can force families of period vectors to be generated from a small set of fundamental ones.

\begin{remark}
The practical content of the Schwarz reflection philosophy is exactly this: choose data so that a finite symmetry group constrains the period map to satisfy explicit linear relations, thereby reducing period closure to finitely many real scalar conditions once a symmetry-adapted generating set of cycles has been chosen.
In codimension two, the same philosophy applies but the available complex linear symmetries $M_\sigma$ (and the induced real relations) can be richer, and the space of admissible data is larger.
\end{remark}

% =========================================================
% Strategy section
% =========================================================
\section{Scope of the Period Problem}\label{sec:strategy}

\subsection{A symmetry-adapted period basis}

For the curve $\Sigma$ defined by \eqref{eq:Schwarz-seed} (of genus $g=3$), one may choose a canonical homology basis adapted to branch cuts connecting pairs of branch points and to the symmetry $\omega\mapsto i\omega$.
In such a basis, many periods occur in symmetry-related quartets.
The codimension-two lift \eqref{eq:lift} then produces a period system depending on $(\lambda,\theta)$ with the schematic form
\[
\mathcal{P}_j(\lambda,\theta)\in\mathbb{R}^4,\qquad j=1,\dots,2g,
\]
and the symmetry theorem below shows exactly how the periods on a full orbit are recovered from a representative.  Solving the remaining scalar conditions for a prescribed lattice is a separate problem.

\subsection{Why codimension two can help, but does not trivialize the problem}

It is tempting to view $\mathbb{R}^4$ as offering ``more room'' and therefore making closure easier.
However, the real period problem remains rigid for two reasons:\\
A). The nullity constraint couples the $1$-forms quadratically and forbids arbitrary choices of $\Phi$.\\
B). Period closure is a global condition on $H_1(\Sigma;\mathbb{Z})$ and interacts strongly with symmetry and branching.\\

What codimension two offers is not automatic solvability, but rather new deformation directions (such as $\lambda$ in \eqref{eq:lift}) that can be used to search for solutions of a reduced period system.

\subsection{What is not proved here}

The preceding reduction does not prove the existence of parameters for which the periods close in a prescribed rank-four lattice.  It also does not establish completeness, embeddedness, or a reflection extension of the resulting immersion.  These questions require additional global arguments and are deliberately not inferred from the local null-curve formulae or from the symmetry relations alone.

% =========================================================
% Topology remarks
% =========================================================
\section{Limitations and Relation to Reflection--Plateau Constructions}\label{sec:topology}

The genus of the curve agrees with that of the standard compact quotients of the Schwarz P/D surfaces, but this agreement alone has no embeddedness consequence.  In particular, extra ambient dimension does not establish sheet separation.  Periodic minimal immersions into flat tori remain subject to global period and regularity constraints.

This point distinguishes the present approach from the construction of Grosse-Brauckmann and Kürsten \cite{GrosseBrauckmannKuersten}.  They begin with a Plateau disk spanning a cubical Jordan curve and use successive Schwarz reflections to obtain complete $n$-periodic surfaces; their analysis supplies embedded examples in $\mathbb{R}^4$.  Here we instead prescribe holomorphic null data on a hyperelliptic curve and establish only the resulting equivariance and period identities.  The two approaches are complementary, but neither the Plateau argument nor its embeddedness conclusion is part of the present proof.

% =========================================================
% NEW SECTION: Symmetry reduction theorem (novel result)
% =========================================================
\section{Symmetry Reduction of the Period System for Schwarz-Type Lifts}\label{sec:newresult}

In this section we give a symmetry-driven reduction principle for the real period map associated to the Schwarz-type lift from Section~\ref{sec:seed}.  The result is structural: it does not solve the period problem, but it shows that (i) the period vectors satisfy explicit symmetry relations, and (ii) the number of independent real period \emph{constraints} can be reduced to a small subset once a symmetry-adapted generating set of cycles is chosen.

\subsection{An order-four automorphism of the Schwarz curve}

Consider the hyperelliptic curve
\[
\Sigma:\quad y^2=\omega^8-14\omega^4+1.
\]
Since $(i\omega)^8-14(i\omega)^4+1=\omega^8-14\omega^4+1$, the map
\[
\sigma:\Sigma\to\Sigma,\qquad \sigma(\omega,y)=(i\omega,y),
\]
is a holomorphic automorphism of order four.

Let $\Phi=(\Phi_1,\Phi_2,\Phi_3,\Phi_4)$ be the holomorphic $1$-forms defined by \eqref{eq:WE-R4} with the Schwarz-type seed
\[
f(\omega)=\frac{2}{\sqrt{\omega^8-14\omega^4+1}},\qquad g_1(\omega)=\omega,\qquad g_2(\omega)=\lambda\omega,
\]
so that $\Phi_j=\phi_j(\omega)\,d\omega$ with $\phi_j$ as in Section~\ref{sec:seed}.

\begin{lemma}[Equivariance of the holomorphic $1$-forms]\label{lem:equivariance_correct}

Recall that $\Phi_j=\phi_j(\omega)\,d\omega$; in what follows, $\sigma^*\Phi_j$ also includes the pullback of $d\omega$.

Under $\sigma(\omega)=i\omega$ the holomorphic $1$-forms satisfy
\begin{equation}\label{eq:sigma_on_Phi}
\sigma^*\Phi_1=\Phi_2,\qquad
\sigma^*\Phi_2=-\Phi_1,\qquad
\sigma^*\Phi_3=-\Phi_3,\qquad
\sigma^*\Phi_4=-\Phi_4.
\end{equation}
Equivalently, with $\Phi$ viewed as a column vector of complex-valued $1$-forms, one has
\[
\sigma^*\Phi = M\,\Phi,
\]
where
\[
M=
\begin{pmatrix}
0 & -1 & 0 & 0\\
1 &  0 & 0 & 0\\
0 & 0 & -1 & 0\\
0 & 0 & 0 & -1
\end{pmatrix}
\in U(4).
\]
\end{lemma}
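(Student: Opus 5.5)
The proof is a direct pullback computation, done one component at a time. Since $\sigma(\omega,y)=(i\omega,y)$, I will record two facts first. The coordinate $y$ is fixed, so the common denominator $y=\sqrt{\omega^8-14\omega^4+1}$ pulls back to itself; this holds on $\Sigma$ itself, not just up to sign, because $\sigma$ is defined with $y\mapsto y$ and the polynomial is invariant under $\omega\mapsto i\omega$. Also $\sigma^*d\omega=i\,d\omega$. Hence for each $j$,
\[
\sigma^*\Phi_j=i\,\phi_j(i\omega)\,d\omega ,
\]
with the numerators evaluated at $i\omega$ and the denominator unchanged.

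Next I substitute into the explicit formulas of Section~\ref{sec:seed}, using $(i\omega)^2=-\omega^2$ in the first two components:
\[
\sigma^*\Phi_1=\frac{i\bigl(1+(1+\lambda^2)\omega^2\bigr)}{y}\,d\omega=\Phi_2,
\qquad
\sigma^*\Phi_2=\frac{i\cdot i\bigl(1-(1+\lambda^2)\omega^2\bigr)}{y}\,d\omega=-\Phi_1 .
\]
For the last two components the numerator is linear in $\omega$, so the extra factor $i$ from the numerator combines with the factor $i$ from $d\omega$:
\[
\sigma^*\Phi_3=\frac{i\cdot 2i\omega}{y}\,d\omega=-\Phi_3,
\qquad
\sigma^*\Phi_4=\lambda\,\sigma^*\Phi_3=-\Phi_4 .
\]
This establishes \eqref{eq:sigma_on_Phi}. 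As a consistency check, the map $(\Phi_1,\Phi_2,\Phi_3,\Phi_4)\mapsto(\Phi_2,-\Phi_1,-\Phi_3,-\Phi_4)$ preserves $\sum\Phi_j^2$, so nullity is preserved, and applying it four times gives the identity, matching $\sigma^4=\mathrm{id}$.

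The only delicate step is translating these identities into the matrix form $\sigma^*\Phi=M\Phi$. In that form, row $k$ of $M$ must express $\sigma^*\Phi_k$ in terms of the $\Phi_j$. The first row should therefore be $(0,1,0,0)$, giving $\sigma^*\Phi_1=\Phi_2$, and the second row should be $(-1,0,0,0)$, giving $\sigma^*\Phi_2=-\Phi_1$.

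The displayed $M$ in the statement has these two rows transposed: its first row gives $\sigma^*\Phi_1=-\Phi_2$, which contradicts \eqref{eq:sigma_on_Phi}. I would resolve this by keeping the componentwise identities, which are what the computation proves, and replacing $M$ by its transpose
\[
M^{T}=\begin{pmatrix}0&1&0&0\\-1&0&0&0\\0&0&-1&0\\0&0&0&-1\end{pmatrix}.
\]
Alternatively, one can note that the stated $M$ is the correct matrix for the convention $\Phi\circ\sigma$ acting on row vectors, so the discrepancy is one of convention rather than of content. Either matrix is a real orthogonal matrix, hence lies in $U(4)$, so that membership claim is immediate.
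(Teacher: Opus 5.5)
Your proof is correct and takes the same approach as the paper's proof: a direct pullback computation using the invariance of $y$, $\sigma^*d\omega=i\,d\omega$ and $(i\omega)^2=-\omega^2$. Your observation about the matrix is also correct, and the paper's proof never checks it. With $\Phi$ as a column vector, the displayed $M$ gives $\sigma^*\Phi_1=-\Phi_2$, so the matrix that realizes the componentwise identities is $M^{T}$. The displayed $M$ instead represents $(\sigma^{-1})^*$, or $\sigma^*$ in the row-vector convention $\sigma^*\Phi=\Phi M$. Note also that $M^{T}$ is exactly the matrix of the map $R$ used in the subsequent corollary.
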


\begin{proof}
Because $P(i\omega)=P(\omega)$, one has $f(i\omega)=f(\omega)$. Moreover,
\[
g_1(i\omega)=i g_1(\omega),\qquad g_2(i\omega)=i g_2(\omega),
\]
so $g_1^2+g_2^2$ changes sign under $\omega\mapsto i\omega$.
At the level of coefficient functions, this yields
\[
\phi_1(i\omega)=\tfrac12 f(\omega)\bigl(1+g_1^2+g_2^2\bigr)=\frac{1}{i}\,\phi_2(\omega),
\qquad
\phi_2(i\omega)=\frac{i}{2}f(\omega)\bigl(1-g_1^2-g_2^2\bigr)=i\,\phi_1(\omega),
\]
and $\phi_3(i\omega)=i\,\phi_3(\omega)$, $\phi_4(i\omega)=i\,\phi_4(\omega)$.
Since $\sigma^*(d\omega)=d(i\omega)=i\,d\omega$, one obtains
\[
\sigma^*\Phi_1=\phi_1(i\omega)\,i\,d\omega=\phi_2(\omega)\,d\omega=\Phi_2,
\qquad
\sigma^*\Phi_2=\phi_2(i\omega)\,i\,d\omega=-\phi_1(\omega)\,d\omega=-\Phi_1,
\]
and similarly $\sigma^*\Phi_3=-\Phi_3$, $\sigma^*\Phi_4=-\Phi_4$.
\end{proof}

\subsection{Consequences for the real period map}

Define the real period map
\[
\mathcal{P}:H_1(\Sigma;\mathbb{Z})\to\mathbb{R}^4,\qquad
\mathcal{P}([\gamma])=\Re\int_\gamma (\Phi_1,\Phi_2,\Phi_3,\Phi_4).
\]

\begin{theorem}[Symmetry relations among real period vectors]\label{thm:symmetryreduction_correct}
For every $[\gamma]\in H_1(\Sigma;\mathbb{Z})$ one has the period identities
\begin{equation}\label{eq:period_identities}
\begin{aligned}
\Re\int_{\sigma(\gamma)}\Phi_1 &= \Re\int_\gamma \Phi_2,\\
\Re\int_{\sigma(\gamma)}\Phi_2 &= -\,\Re\int_\gamma \Phi_1,\\
\Re\int_{\sigma(\gamma)}\Phi_3 &= -\,\Re\int_\gamma \Phi_3,\\
\Re\int_{\sigma(\gamma)}\Phi_4 &= -\,\Re\int_\gamma \Phi_4.
\end{aligned}
\end{equation}
In particular, if a set of homology classes $\Gamma\subset H_1(\Sigma;\mathbb{Z})$ generates $H_1(\Sigma;\mathbb{Z})$ and is closed under the $\sigma$-action, then the real period vectors $\mathcal{P}([\gamma])$ for all $[\gamma]\in\Gamma$ are determined by the real periods on any choice of orbit representatives in $\Gamma$, via \eqref{eq:period_identities} and its iterates.
\end{theorem}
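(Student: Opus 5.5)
The argument is a change of variables along the cycle, followed by Lemma~\ref{lem:equivariance_correct}. Fix a class $[\gamma]\in H_1(\Sigma;\mathbb{Z})$ and choose a piecewise smooth closed representative $\gamma$. Since $\sigma$ is a holomorphic automorphism of $\Sigma$, the image loop $\sigma\circ\gamma$ represents $\sigma_*[\gamma]$. Each $\Phi_j$ is holomorphic, hence closed, so its integral over a cycle depends only on the homology class. Both sides of \eqref{eq:period_identities} are therefore well defined.

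The basic identity is
\[
\int_{\sigma(\gamma)}\Phi_j=\int_\gamma\sigma^*\Phi_j ,
\]
which is just the change-of-variables formula for line integrals along the parametrized loop. Substituting the pullbacks from \eqref{eq:sigma_on_Phi} gives
\[
\int_{\sigma(\gamma)}\Phi_1=\int_\gamma\Phi_2,\qquad
\int_{\sigma(\gamma)}\Phi_2=-\int_\gamma\Phi_1,\qquad
\int_{\sigma(\gamma)}\Phi_k=-\int_\gamma\Phi_k\quad(k=3,4).
\]
The matrix $M$ has real entries, so taking real parts commutes with it. This yields \eqref{eq:period_identities}, which in vector form reads $\mathcal{P}(\sigma_*[\gamma])=M\,\mathcal{P}([\gamma])$.

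For the second assertion, iterate this relation to get $\mathcal{P}(\sigma^k_*[\gamma])=M^k\mathcal{P}([\gamma])$ for $k=0,1,2,3$. This is consistent with the group structure: $\sigma^4=\mathrm{id}$ and $M^4=I$, since the first block is rotation by $\pi/2$ and the second is $-I_2$.

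Now let $\Gamma$ be closed under $\sigma_*$. Every element of $\Gamma$ has the form $\sigma_*^k[\gamma_0]$ for some chosen orbit representative $[\gamma_0]$. Its period is therefore $M^k\mathcal{P}([\gamma_0])$, determined by the representative's period. Because $\mathcal{P}$ is a group homomorphism (integration is additive on chains), the periods on the generating set $\Gamma$ then determine $\mathcal{P}$ on all of $H_1(\Sigma;\mathbb{Z})$.

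The only delicate point is that $\sigma^*\Phi_j$ in Lemma~\ref{lem:equivariance_correct} must be read as an identity of $1$-forms on $\Sigma$, not merely of the local coefficient expressions in $\omega$. The lift $\sigma(\omega,y)=(i\omega,y)$ fixes $y$, so $f=2/y$ pulls back to $f$ on each sheet and the pullback computation holds globally, including near the branch points. It also holds at the points over $\omega=\infty$: there one would check in a local coordinate on $\Sigma$ (as in Appendix~B) that the $\Phi_j$ remain holomorphic, and the identity extends by continuity since both sides are holomorphic forms agreeing on a dense open set. With this settled, the rest is formal.
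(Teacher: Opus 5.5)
Your proof is correct and is essentially the paper's argument. It uses the change of variables $\int_{\sigma(\gamma)}\Phi_j=\int_\gamma\sigma^*\Phi_j$, Lemma~\ref{lem:equivariance_correct}, real parts and iteration along $\sigma$-orbits, and your remarks on homology invariance and on the branch points and points at infinity fill in details the paper leaves implicit (at infinity the $\Phi_j$, being combinations of $d\omega/y$, $\omega\,d\omega/y$, $\omega^2\,d\omega/y$, are indeed holomorphic).

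One caution: your vector form $\mathcal{P}(\sigma_*[\gamma])=M\,\mathcal{P}([\gamma])$ holds only for the matrix actually encoded by \eqref{eq:sigma_on_Phi}, which is the matrix of $R$ in Corollary~\ref{cor:periodreduction_correct} with first row $(0,1,0,0)$. The matrix $M$ displayed in the lemma is its transpose, so rely on the componentwise identities, as you in fact did.
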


\begin{proof}
For any cycle $\gamma$,
\[
\int_{\sigma(\gamma)}\Phi_j=\int_\gamma \sigma^*\Phi_j.
\]
Applying Lemma~\ref{lem:equivariance_correct} and taking real parts yields \eqref{eq:period_identities}.
If $\Gamma$ is $\sigma$-stable, then the $\sigma$-iterates of orbit representatives determine the periods on all cycles in $\Gamma$, hence the claim.
\end{proof}

\begin{corollary}[Symmetry-reduced period-closing constraints]\label{cor:periodreduction_correct}
Let $R\in O(4)$ be the real matrix induced by $M$ in \eqref{eq:sigma_on_Phi}; explicitly,
\[
R(x_1,x_2,x_3,x_4)=(x_2,-x_1,-x_3,-x_4).
\]
If the lattice $\Lambda\subset\mathbb{R}^4$ is $R$-invariant, then for the Schwarz-type family (including the lift \eqref{eq:lift}), the torus-periodicity condition
\[
\mathcal{P}\bigl(H_1(\Sigma;\mathbb{Z})\bigr)\subset\Lambda\subset\mathbb{R}^4
\]
can be checked on a symmetry-reduced set of cycles consisting of one representative from each $\sigma$-orbit in a $\sigma$-stable generating set of $H_1(\Sigma;\mathbb{Z})$ (with the corresponding relations among their periods given by \eqref{eq:period_identities}).
\end{corollary}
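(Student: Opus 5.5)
The plan is to rewrite the identities \eqref{eq:period_identities} of Theorem~\ref{thm:symmetryreduction_correct} in vector form as
\[
\mathcal{P}(\sigma_*[\gamma]) = R'\,\mathcal{P}([\gamma]),\qquad R'(x_1,x_2,x_3,x_4)=(x_2,-x_1,-x_3,-x_4).
\]
The identities give the first component $x_2$, the second $-x_1$, the third $-x_3$ and the fourth $-x_4$. So $R'$ is exactly the matrix $R$ of the statement, which is the real form of $M$; since $M$ has real entries, $\Re(M\Phi)=M\Re\Phi$. Note that $R$ is orthogonal and $R^4=\mathrm{id}$, and $R$-invariance of $\Lambda$ means $R\Lambda\subseteq\Lambda$. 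Because $R$ is invertible and $\Lambda$ has full rank, this gives $R\Lambda=\Lambda$, hence also $R^{-1}\Lambda=\Lambda$. Iterating yields $\mathcal{P}(\sigma^k_*[\gamma])=R^k\mathcal{P}([\gamma])$ for all $k\in\mathbb{Z}$.

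Next, let $\Gamma$ be a $\sigma$-stable generating set and $\{\gamma_1,\dots,\gamma_m\}$ a set of orbit representatives. Suppose $\mathcal{P}([\gamma_i])\in\Lambda$ for each $i$. Every element of $\Gamma$ has the form $\sigma^k_*[\gamma_i]$, so its period is $R^k\mathcal{P}([\gamma_i])\in R^k\Lambda=\Lambda$. Since $\mathcal{P}$ is a group homomorphism $H_1(\Sigma;\mathbb{Z})\to\mathbb{R}^4$ (integration is additive on cycles, and real parts are additive), and $\Gamma$ generates $H_1$, every period is an integer combination of elements of $\Lambda$, hence lies in $\Lambda$. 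The converse direction is trivial, because the representatives are themselves homology classes. This shows that checking the representatives is equivalent to the full condition \eqref{eq:period-closure}.

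The remaining concern is the phrase ``including the lift \eqref{eq:lift}''. With $f_\theta=e^{i\theta}/\sqrt{\cdot}$, the forms are multiplied by the constant $e^{i\theta}$, and the equivariance $\sigma^*\Phi=M\Phi$ is linear over $\mathbb{C}$ with real matrix $M$. Therefore $\sigma^*(e^{i\theta}\Phi)=M(e^{i\theta}\Phi)$, and the argument above goes through unchanged. The same holds for any complex $\lambda$, since Lemma~\ref{lem:equivariance_correct} used only $g_j(i\omega)=ig_j(\omega)$.

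The only step I expect to need care is the precise meaning of ``$R$-invariant''. It should be read as $R\Lambda\subseteq\Lambda$, and I would derive equality from full rank and invertibility. Note that negative powers of $\sigma$ are not needed: they are covered by $\sigma^{-1}=\sigma^3$.
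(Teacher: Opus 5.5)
Your proposal is correct and is essentially the paper's argument: rewrite \eqref{eq:period_identities} as $\mathcal{P}(\sigma_*[\gamma])=R\,\mathcal{P}([\gamma])$, iterate along each orbit, use invariance of $\Lambda$, and pass from the generating set to all of $H_1(\Sigma;\mathbb{Z})$ by additivity of $\mathcal{P}$ (a step the paper leaves implicit). Two harmless slips: first, invertibility plus full rank alone does not force $R\Lambda=\Lambda$ (consider $2\,\mathrm{id}$); it is $R^4=\mathrm{id}$, or $|\det R|=1$, that does, and in fact only $R^k\Lambda\subseteq\Lambda$ for $k=0,\dots,3$ is used. Second, the matrix $M$ as printed in Lemma~\ref{lem:equivariance_correct} is inconsistent with \eqref{eq:sigma_on_Phi} and actually induces $R^{-1}$ rather than $R$, which changes nothing since $\Lambda$ is invariant under both.
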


\begin{proof}
By Theorem~\ref{thm:symmetryreduction_correct}, the real periods on a $\sigma$-stable generating set are obtained from periods on orbit representatives by iterating $R$.  Since $R\Lambda=\Lambda$, membership of a representative period in $\Lambda$ is equivalent to membership of every period in its orbit.  This proves the assertion.
\end{proof}

\begin{remark}[Conceptual significance]
The identities \eqref{eq:period_identities} are the codimension-two analogue of the classical Schwarz reflection reduction: symmetry of the algebraic data forces explicit linear relations among periods.
This justifies, at a structural level, the strategy of imposing symmetry first and then solving a reduced finite system of real closure conditions in the parameters (such as $(\lambda,\theta)$).
\end{remark}

\section*{Appendix A: Standard Elliptic Normal Forms for the Quartic Radical} \label{sec:appendixA}

For period computations it is often convenient to use explicit standard normal forms.
Consider
\[
I(t)=\int \frac{dt}{\sqrt{(t^2-\alpha^2)(t^2-\beta^2)}},
\qquad \alpha>\beta>0.
\]
A classical substitution (one of several equivalent choices) reduces $I(t)$ to an incomplete elliptic integral of the first kind, with modulus $k=\beta/\alpha$.
More generally, integrals of the form
\[
\int \frac{dt}{\sqrt{(t^2-\alpha^2)(t^2-\beta^2)}},\quad
\int \frac{t^2\,dt}{\sqrt{(t^2-\alpha^2)(t^2-\beta^2)}},\quad
\int \frac{dt}{t^2\sqrt{(t^2-\alpha^2)(t^2-\beta^2)}}
\]
can be expressed as linear combinations of $F(\cdot|k)$ and $E(\cdot|k)$.
A convenient reference for explicit tables of such reductions is \cite{ByrdFriedman}.

\begin{remark}
The elliptic normal form applies directly to the third and fourth coordinate periods.  The other coordinate periods remain Abelian integrals on $\Sigma$ unless one specifies an additional quotient or symmetry reduction.  We therefore do not use this appendix to claim a complete closed-form solution of the period problem.
\end{remark}

\section*{Appendix B: Analytic and Geometric Interpretation of the Schwarz-Type Seed} \label{sec:appendixB}

In this appendix we clarify the analytic structure of the Schwarz-type seed
\[
f(\omega)=\frac{2}{\sqrt{\omega^8-14\omega^4+1}},
\]
and explain how its algebraic branch locus enters the Weierstrass $1$-forms used in the paper.

\subsection*{B.1 Branch points of the hyperelliptic curve}

Let
\[
P(\omega)=\omega^8-14\omega^4+1,
\qquad
\Sigma=\{(\omega,y)\in\mathbb{C}^2:\ y^2=P(\omega)\}.
\]
The zeros of $P(\omega)$ are precisely the branch points of the two-sheeted hyperelliptic covering
$\pi:\Sigma\to\mathbb{C}$ given by $\pi(\omega,y)=\omega$.

To locate them, set $t=\omega^4$. Then $P(\omega)=0$ is equivalent to
\[
t^2-14t+1=0,
\qquad
t=7\pm4\sqrt{3}.
\]
Since
\[
7\pm4\sqrt{3}=(2\pm\sqrt{3})^2,
\]
we obtain
\[
\omega^4=(2\pm\sqrt{3})^2,
\]
and hence the eight branch points are
\[
\omega\in
\Big\{
\pm\sqrt{2+\sqrt{3}},\ \pm i\sqrt{2+\sqrt{3}},\
\pm\sqrt{2-\sqrt{3}},\ \pm i\sqrt{2-\sqrt{3}}
\Big\}.
\]
Equivalently, the branch points lie on the two circles
\[
|\omega|=\sqrt{2-\sqrt{3}}\approx0.5176,
\qquad
|\omega|=\sqrt{2+\sqrt{3}}\approx1.9319.
\]

Since $\deg P=8$ and all roots are simple, the compactification of $\Sigma$ is a hyperelliptic
Riemann surface of genus
\[
g=\frac{8-2}{2}=3.
\]

\subsection*{B.2 Two-sheeted structure and local coordinates at branch points}

The square root in the definition of $f(\omega)$ becomes single-valued on $\Sigma$ by setting
$y=\sqrt{P(\omega)}$. Under analytic continuation around any branch point, the local determination
of $y$ changes sign:
\[
y\longmapsto -y.
\]
Equivalently, $\Sigma$ is the natural domain on which $y$ is holomorphic.

A crucial point for the Weierstrass construction is that although $f=2/y$ has poles at points where $y=0$,
the \emph{holomorphic $1$-forms} used to build the immersion are of the form
\[
\Phi_j=\phi_j(\omega)\,d\omega,
\]
and these can be regular at $y=0$ because $d\omega$ vanishes in a true local coordinate on $\Sigma$.

Indeed, near a branch point $\omega_0$ one may choose a local coordinate $s$ on $\Sigma$ with
\[
\omega-\omega_0 = s^2,\qquad y = s\cdot h(s),
\]
where $h(0)\neq 0$. Then
\[
d\omega = 2s\,ds.
\]
Thus $f\,d\omega = (2/y)\,d\omega$ has the local behavior
\[
\frac{2}{y}\,d\omega \sim \frac{2}{s}\cdot (2s\,ds)=4\,ds,
\]
and hence is holomorphic. Since the remaining factors in $\phi_j$ are holomorphic, each $\Phi_j$ extends holomorphically across $y=0$.

\subsection*{B.3 Holomorphic nature of the Weierstrass $1$-forms on $\Sigma$}

In the main text we use
\[
f(\omega)=\frac{2}{y},\qquad g_1(\omega)=\omega,\qquad g_2(\omega)=\lambda\omega,
\]
and define $\Phi=(\Phi_1,\Phi_2,\Phi_3,\Phi_4)$ by
\[
\Phi_1=\tfrac12 f(1-g_1^2-g_2^2)\,d\omega,\quad
\Phi_2=\tfrac{i}{2}f(1+g_1^2+g_2^2)\,d\omega,\quad
\Phi_3=fg_1\,d\omega,\quad
\Phi_4=fg_2\,d\omega.
\]

As explained in B.2, although $f$ itself is meromorphic on $\Sigma$, the products defining $\Phi_j$
extend holomorphically across the branch locus.

Moreover, one checks similarly at the points at infinity on the compactification of $\Sigma$
(for example by using $\zeta=1/\omega$ as a parameter) that the $1$-forms $\Phi_j$ extend meromorphically there.
Consequently, the Schwarz-type data define a meromorphic null $1$-form $\Phi$ on the compact genus-$3$ surface $\Sigma$, which is holomorphic on the affine hyperelliptic curve away from the points at infinity.

From the standpoint of the immersion
\[
X=\Re\int \Phi,
\]
this is the cleanest analytic situation on the affine curve: the $1$-forms have no singularities away from the points at infinity, and the only global obstruction to single-valuedness is the real period map discussed in Section~\ref{sec:period}.

\subsection*{B.4 Induced metric: correct structural dependence and coordinate caution}

The induced metric is
\[
ds^2=|\Phi_1|^2+|\Phi_2|^2+|\Phi_3|^2+|\Phi_4|^2.
\]
In a local coordinate $\omega$ for which $\Phi_j=\phi_j(\omega)\,d\omega$, one has
\[
ds^2=
\bigl(|\phi_1|^2+|\phi_2|^2+|\phi_3|^2+|\phi_4|^2\bigr)\,|d\omega|^2
=
\frac{|f(\omega)|^2}{2}\bigl(1+|g_1(\omega)|^2+|g_2(\omega)|^2\bigr)^2|d\omega|^2.
\]
However, at a branch point of $\pi:\Sigma\to\mathbb{C}$, the projection coordinate $\omega$ is \emph{not}
a true local coordinate on $\Sigma$. In a genuine local coordinate $s$ on $\Sigma$ (with $\omega-\omega_0=s^2$),
the forms $\Phi_j$ are holomorphic and the metric is finite and well defined.
Thus any apparent blow-up of the coefficient expression in the $\omega$-plane is a coordinate artifact of viewing $\Sigma$
through the branched projection.

\begin{figure}[t!]
    \centering
    \arxivgraphic{width=0.5\linewidth}{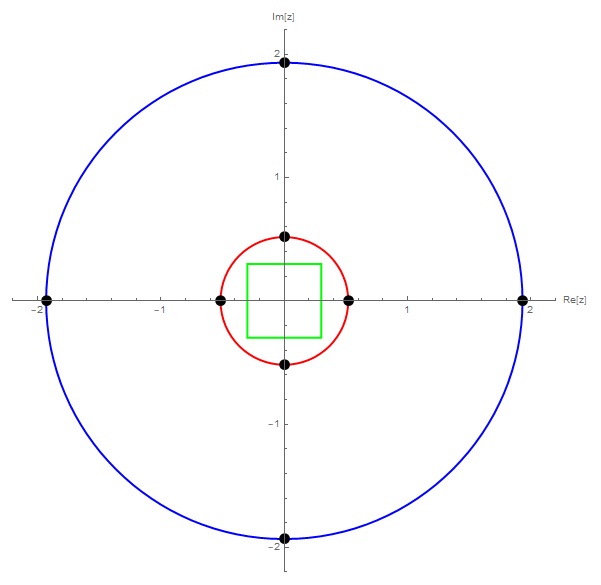}
    \caption{
    The branch locus of $P(\omega)=\omega^8-14\omega^4+1$ consists of eight points on the circles 
$|\omega|=\sqrt{2\pm\sqrt{3}}$, where the sheets of $y=\sqrt{P(\omega)}$ interchange.
    }
    \label{fig:singularity}
\end{figure}

\subsection*{B.5 Interpretation for visualizations}

When plotting in the $\omega$-plane, the eight branch points appear as four points on each of the
two circles
\[
|\omega|=\sqrt{2-\sqrt{3}},
\qquad
|\omega|=\sqrt{2+\sqrt{3}}.
\]
See Figure~\ref{fig:singularity}. A choice of branch cuts in the $\omega$-plane corresponds to choosing a simply connected domain on which a single branch of $y=\sqrt{P(\omega)}$ is selected; moving across a cut interchanges the sheets (i.e.\ sends $y\mapsto -y$). 

From the viewpoint of numerical evaluation, one typically avoids neighborhoods of the branch locus in the \emph{projection plane} to maintain numerical stability; nonetheless, on the surface $\Sigma$ itself the holomorphic forms remain regular.

\subsection*{B.6 Conceptual summary}

The Schwarz polynomial $P(\omega)=\omega^8-14\omega^4+1$ determines a genus-$3$ hyperelliptic
curve $\Sigma:\ y^2=P(\omega)$ with eight branch points arranged on two concentric circles in the
$\omega$-plane. The seed $f(\omega)=2/\sqrt{P(\omega)}$ is naturally interpreted as $f=2/y$ on $\Sigma$.
Although $f$ is meromorphic, the Weierstrass objects that govern the immersion are the holomorphic $1$-forms
$\Phi_j=\phi_j(\omega)\,d\omega$, and these extend holomorphically across the branch points, and meromorphically at the points at infinity on the compactification.
Hence, the global analytic structure emphasized throughout the paper is controlled by the real period map,
not by singularities of the differential data.

\section*{Declarations}

\noindent\textbf{Funding} \\
This research received no external funding.

\vspace{0.6em}
\noindent\textbf{Conflict of interest} \\
The authors declare that they have no conflict of interest.

\vspace{0.6em}
\noindent\textbf{Data availability} \\
No datasets were generated or analyzed during the current study.

\vspace{0.6em}
\noindent\textbf{Author contributions} \\
All authors contributed equally to the conceptualization, analysis, and writing of the manuscript. All authors read and approved the final version.

%\begin{appendices}

%\section{Section title of first appendix}\label{secA1}

%An appendix contains supplementary information that is not an essential part of the text itself but which may be helpful in providing a more comprehensive understanding of the research problem or it is information that is too cumbersome to be included in the body of the paper.

%%=============================================%%
%% For submissions to Nature Portfolio Journals %%
%% please use the heading ``Extended Data''.   %%
%%=============================================%%

%%=============================================================%%
%% Sample for another appendix section			       %%
%%=============================================================%%

%% \section{Example of another appendix section}\label{secA2}%
%% Appendices may be used for helpful, supporting or essential material that would otherwise 
%% clutter, break up or be distracting to the text. Appendices can consist of sections, figures, 
%% tables and equations etc.

%\end{appendices}

%%===========================================================================================%%
%% If you are submitting to one of the Nature Portfolio journals, using the eJP submission   %%
%% system, please include the references within the manuscript file itself. You may do this  %%
%% by copying the reference list from your .bbl file, paste it into the main manuscript .tex %%
%% file, and delete the associated \verb+\bibliography+ commands.                            %%
%%===========================================================================================%%

\bigskip
\noindent
\textbf{Erhan G\"uler}\\
Department of Mathematics and Statistics, Texas Tech University,\\
Lubbock, TX 79409, USA\\
\textit{Email:} \texttt{eguler@ttu.edu}

\bigskip
\noindent
\textbf{Magdalena Toda}\\
Department of Mathematics and Statistics, Texas Tech University,\\
Lubbock, TX 79409, USA\\
\textit{Email:} \texttt{magda.toda@ttu.edu}


\begin{thebibliography}{99}

\bibitem{AiyamaAkutagawa}
R.~Aiyama and K.~Akutagawa,
Kenmotsu type representation formulae for surfaces with prescribed mean curvature in the 
$3$-sphere,
\textit{Tohoku Math. J.} \textbf{52} (2000), 95--105.
doi: \href{https://doi.org/10.2748/tmj/1178224660}{10.2748/tmj/1178224660}.

\bibitem{BoltonPeditWoodward}
J.~Bolton, F.~Pedit, and L.~M.~Woodward,
Minimal surfaces and the affine Toda field equations,
\textit{J. Reine Angew. Math.} \textbf{459} (1995), 119--150.
doi: \href{https://doi.org/10.1515/crll.1995.459.119}{10.1515/crll.1995.459.119}.

\bibitem{ByrdFriedman}
P.~F.~Byrd and M.~D.~Friedman,
\textit{Handbook of Elliptic Integrals for Engineers and Scientists},
2nd ed., Springer, 1971.
doi: \href{https://doi.org/10.1007/978-3-642-65138-0}{10.1007/978-3-642-65138-0}.

\bibitem{GBrauckmannSurvey}
K.~Grosse-Brauckmann,
Triply periodic minimal and constant mean curvature surfaces,
\textit{Interface Focus} \textbf{2} (2012), 582--588.
doi: \href{https://doi.org/10.1098/rsfs.2011.0096}{10.1098/rsfs.2011.0096}.

\bibitem{GrosseBrauckmannKuersten}
K.~Grosse-Brauckmann and S.~Kürsten,
Construction of embedded periodic surfaces in $\mathbb{R}^n$,
arXiv:1707.09176 (2017).

\bibitem{Guest}
M.~A.~Guest,
\textit{Harmonic Maps, Loop Groups, and Integrable Systems},
Cambridge University Press, Cambridge, 1997.
doi: \href{https://doi.org/10.1017/CBO9781139174848}{10.1017/CBO9781139174848}.

\bibitem{TodaGulerRn}
E.~G\"uler and M.~Toda,
Weierstrass-type constructions, variational analysis and integral-free minimal immersions in $\mathbb{R}^n$,
\textit{Electron. Res. Arch.} \textbf{34}(3) (2026), 1885--1899.
doi: \href{https://doi.org/10.3934/era.2026084}{10.3934/era.2026084}.

\bibitem{HoffmanMeeks}
K.~Hoffman and W.~H.~Meeks III,
A complete embedded minimal surface in $\mathbb{R}^3$ with genus one and three ends,
\textit{J. Differential Geom.} \textbf{21} (1985), 109--127.
doi: \href{https://doi.org/10.4310/jdg/1214439467}{10.4310/jdg/1214439467}.

\bibitem{Kobayashi}
S.~Kobayashi,
\textit{Differential Geometry of Complex Vector Bundles},
Princeton University Press, Princeton, NJ, 1987.
doi: \href{https://doi.org/10.1515/9781400858682}{10.1515/9781400858682}.

\bibitem{LopezRos}
F.~J.~L\'opez and A.~Ros,
On embedded complete minimal surfaces of genus zero,
\textit{J. Differential Geom.} \textbf{33} (1991), 293--300.
doi: \href{https://doi.org/10.4310/jdg/1214446040}{10.4310/jdg/1214446040}.

\bibitem{Nitsche}
J.~C.~C.~Nitsche,
\textit{Lectures on Minimal Surfaces}, Vol.~1,
Cambridge University Press, Cambridge, 1989.

\bibitem{Osserman}
R.~Osserman,
\textit{A Survey of Minimal Surfaces},
Dover Publications, New York, 1986.

\bibitem{PirolaSpin}
G.~P.~Pirola,
The infinitesimal variation of the spin abelian differentials and periodic minimal surfaces,
\textit{Comm. Anal. Geom.} \textbf{6} (1998), 393--426.
doi: \href{https://doi.org/10.4310/CAG.1998.v6.n3.a1}{10.4310/CAG.1998.v6.n3.a1}.

\bibitem{Schwarz}
H.~A.~Schwarz,
\textit{Gesammelte mathematische Abhandlungen},
Springer, Berlin, 1890.

\bibitem{ShodaJLMS}
T.~Shoda,
New components of the moduli space of minimal surfaces in 4-dimensional flat tori,
\textit{J. London Math. Soc.} (2) \textbf{70} (2004), 797--816.
doi: \href{https://doi.org/10.1112/S0024610704005903}{10.1112/S0024610704005903}.

\bibitem{ShodaTrigonal}
T.~Shoda,
Trigonal minimal surfaces in flat tori,
\textit{Pacific J. Math.} \textbf{232} (2007), 401--422.
doi: \href{https://doi.org/10.2140/pjm.2007.232.401}{10.2140/pjm.2007.232.401}.

\bibitem{ShodaModuliArXiv}
T.~Shoda,
New components of the moduli space of minimal surfaces in 4-dimensional flat tori,
arXiv:math/0408052.

\bibitem{SmallR4}
A.~Small,
Algebraic minimal surfaces in $\mathbb{R}^4$,
\textit{Math. Scand.} \textbf{94} (2004), 109--124.
doi: \href{https://doi.org/10.7146/math.scand.a-14432}{10.7146/math.scand.a-14432}.

\bibitem{TodaGulerR4}
M.~Toda and E.~G\"uler,
Generalized Weierstrass--Enneper representation for minimal surfaces in $\mathbb{R}^4$,
\textit{AIMS Mathematics} \textbf{10}(9) (2025), 22406--22420.
doi: \href{https://doi.org/10.3934/math.2025997}{10.3934/math.2025997}.

\bibitem{WeyhauptGyroid}
A.~G.~Weyhaupt,
Deformations of the gyroid and Lidinoid minimal surfaces,
\textit{Pacific J. Math.} \textbf{235} (2008), 137--171.
doi: \href{https://doi.org/10.2140/pjm.2008.235.137}{10.2140/pjm.2008.235.137}.





\end{thebibliography}
\end{document}